\theoremstyle{plain}
	\newtheorem{theorem}{Theorem}[section]
	\newtheorem{corollary}[theorem]{Corollary}
\theoremstyle{definition}
\begin{document}
\title{A note on the Erd\H{o}s conjecture about square packing}
\author[J. Baek]{Jineon Baek}
\address{Yonsei University, 50 Yonsei-Ro, Seodaemun-Gu, Seoul 03722, Korea}
\email{jineon@yonsei.ac.kr}
\author[J. Koizumi]{Junnosuke Koizumi}
\address{RIKEN iTHEMS, Wako, Saitama 351-0198, Japan}
\email{junnosuke.koizumi@riken.jp}
\author[T. Ueoro]{Takahiro Ueoro}
\email{orotaka@gmail.com}

\date{\today}
\thanks{}
\subjclass{52C15, 52C10}

\begin{abstract}
    Let $f(n)$ denote the maximum total length of the sides of $n$ squares packed inside a unit square.
    Erd\H{o}s conjectured that $f(k^2+1)=k$.
    We show that the conjecture is true if we assume that the sides of the squares are parallel to the sides of the unit square.
\end{abstract}

\maketitle
\setcounter{tocdepth}{1}
\tableofcontents

\enlargethispage*{20pt}
\thispagestyle{empty}

\section{Introduction}

Let $f(n)$ denote the maximum total length of the sides of $n$ squares packed inside a unit square.
The Cauchy-Schwarz inequality implies that $f(k^2)=k$.
Erd\H{o}s \cite{Erdos94} conjectured that $f(k^2+1)=k$, but this remains unsolved to this day.
More generally, Erd\H{o}s-Soifer \cite{Erdos-Soifer95} and Campbell-Staton \cite{Campbell-Staton05} independently discovered the lower bound $f(k^2+2c+1)\geq k+(c/k)$, where $-k<c<k$, and conjectured that this is the best possible.
Praton \cite{Praton08} proved that this general conjecture is actually equivalent to the original conjecture of Erd\H{o}s.

In this short note, we prove the Erd\H{o}s conjecture for the following modification of $f(n)$ introduced by Staton-Tyler \cite{Staton-Tyler07}.
Let $g(n)$ denote the maximum total length of the sides of $n$ squares packed inside a unit square, where the sides of the squares are assumed to be \emph{parallel} to the sides of the unit square.
Again we have $g(k^2)=k$, and $g(k^2+2c+1)\geq k+(c/k)$ for $-k<c<k$.
Our main result is the following:
\begin{theorem}
    For any integers $k,c$ with $-k<c<k$, we have $g(k^2+2c+1)=k+(c/k)$.
\end{theorem}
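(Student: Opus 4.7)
The lower bound $g(k^2 + 2c + 1) \geq k + c/k$ is supplied by the explicit axis-parallel constructions of Erd\H{o}s--Soifer and Campbell--Staton referenced in the introduction, so the content of the theorem is the matching upper bound: for every axis-parallel packing, $S := \sum_{i=1}^n s_i \leq k + c/k$, where $n = k^2 + 2c + 1$ and $s_1, \ldots, s_n$ are the sides of the squares. Since $2k(k+c/k) = n + k^2 - 1$, this is equivalent to the ``slack inequality''
\[
\sum_{i=1}^n (1 - k s_i)^2 \;+\; k^2 \Bigl(1 - \sum_{i=1}^n s_i^2\Bigr) \;\geq\; 1.
\]
The termwise AM--GM estimate $(1 - k s_i)^2 \geq 0$ together with the area bound $\sum_i s_i^2 \leq 1$ only yields ``$\geq 0$''. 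A direct check shows that in the extremal configurations (for $c \geq 0$: the $k \times k$ grid of squares of side $1/k$ with a $c \times c$ sub-block replaced by a $(c+1) \times (c+1)$ grid of equal squares of side $c/(k(c+1))$; for $c < 0$ a symmetric merging construction) the left-hand side is \emph{exactly} $1$. Thus the axis-parallel constraint must supply precisely one additional unit of slack beyond the generic AM--GM estimate.

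My plan is to extract this slack by a grid-slicing double count. At each of the $k - 1$ interior horizontal lines $y = j/k$, the axis-parallel squares strictly crossing that line have pairwise disjoint $x$-projections inside $[0, 1]$, so the sum of their sides is at most $1$; the same holds at the $k - 1$ interior vertical lines. Summing over all $2(k-1)$ lines and rearranging yields
\[
\sum_{i=1}^n s_i (h_i + v_i) \;\leq\; 2(k - 1),
\]
where $h_i$ (resp.\ $v_i$) counts the interior horizontal (resp.\ vertical) grid lines strictly crossed by square $i$. Combining this with the elementary lower bound $h_i + v_i \geq 2(\lceil k s_i \rceil - 1)$, the termwise AM--GM $1 + (k s_i)^2 \geq 2 k s_i$, and the area bound in a carefully chosen convex combination is meant to deliver the slack inequality.

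The main obstacle is extracting the precise final unit of slack. The grid-slicing inequality above is sharp only when the packing is perfectly grid-aligned, and sharpness in AM--GM forces $n = k^2$; these conditions are incompatible with $n - k^2 = 2c + 1$ being odd and nonzero, so a strictly positive defect is guaranteed, but capturing it at the exact value $1$ is the heart of the proof. I expect this step to require either a refined estimate on $h_i + v_i$ that tracks the fractional parts $\{k s_i\}$ (so that the $(1 - k s_i)^2$ terms are absorbed precisely), or an extremal-cum-perturbation argument that first reduces the problem to configurations with all sides commensurable to a common grid and then dispatches a finite combinatorial optimization at those configurations.
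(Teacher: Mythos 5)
Your proposal is not a complete proof: the decisive step --- showing that the axis-parallel constraint yields exactly one extra unit of slack in your inequality $\sum_i (1-ks_i)^2 + k^2(1-\sum_i s_i^2)\geq 1$ --- is left as something you ``expect'' to obtain from a refined estimate or a perturbation argument, and this is precisely where all the difficulty lives. Moreover, the specific machinery you set up is too weak to deliver it. Your grid-slicing uses the \emph{fixed} interior lines $y=j/k$ and $x=j/k$; with a fixed grid there is no reason any particular square meets enough lines (already for $k=1$ there are no interior lines at all, and in general the bound $h_i+v_i\geq 2(\lceil ks_i\rceil-1)$ degenerates for small squares), so the combination with AM--GM and the area bound cannot be closed. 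The paper's argument for the crucial case $c=0$ instead takes $k$ lines at positions $a+j/k$ with the offset $a$ chosen \emph{uniformly at random} in $[0,1/k)$; the expectation of the total number of line--square incidences then exceeds $k^2$, and pigeonhole produces a single good offset for which a clean bookkeeping of the squares meeting $0$, $1$, or $\geq 2$ lines yields $\sum_i d_i\leq k$. That averaging step is the missing idea in your write-up.

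You are also working much harder than necessary on the general case. The paper does not prove the upper bound for arbitrary $c$ directly: it proves only $g(k^2+1)=k$ (the case $c=0$) and then invokes Praton's reduction, which shows that the full family of identities $f(k^2+2c+1)=k+(c/k)$ for $-k<c<k$ is a formal consequence of the single case $f(k^2+1)=k$, with the same argument applying verbatim to $g$. So the statement you were asked to prove follows from the $c=0$ theorem plus a citation; any direct attack on general $c$ should at minimum be aware that this reduction is available, and your plan as written neither completes the general case nor establishes the $c=0$ case it would reduce to.
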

Note that this determines all values of $g(n)$: for any positive integer $n$ with $k^2< n<(k+1)^2$, either $n-k^2$ or $(k+1)^2-n$ is odd.
When $n\neq 2,3,5$ and $n+1$ is not a square number, the value of $g(n)$ can be achieved by a \emph{tiling} of the unit square by squares \cite{Staton-Tyler07}.
(A tiling is a packing where the unit square is completely filled.)
Therefore, if we write $h(n)$ for the maximum total side length of a tiling of a unit square by $n$ squares (where $n\neq 2,3,5$), then we have $g(n)=h(n)$ if $n+1$ is not a square number.
On the other hand, Praton \cite{Praton13} showed that $h(8) = 13/5 < 8/3 = g(8)$.
The values of $h(k^2 - 1)$ remain a mystery.


\section{The proofs}

\begin{theorem}\label{main}
    For any positive integer $k$, we have $g(k^2+1)=k$.
\end{theorem}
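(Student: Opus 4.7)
The plan is to argue by contradiction. Suppose we have an axis-parallel packing of $n = k^2+1$ squares with side lengths $s_1,\dots,s_n$ and total $L = \sum s_i > k$. For $t \in [0,1]$, let $M_x(t)$ count the squares whose $x$-projection contains $t$, and define $M_y(t)$ analogously. Then $L = \int_0^1 M_x(t)\,dt = \int_0^1 M_y(t)\,dt$. The $x$-projections $[a_i,b_i]$ form an interval graph, which is perfect, so they admit a proper coloring with $\omega_x := \max_t M_x(t)$ classes, each consisting of pairwise disjoint subintervals of $[0,1]$ of total length at most $1$. Summing across classes gives the key bound $L \leq \omega_x$, and symmetrically $L \leq \omega_y := \max_t M_y(t)$.

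Since $L > k$, both $\omega_x, \omega_y \geq k+1$. Thus there is a vertical line $x = t^*$ crossed by a set $V$ of at least $k+1$ squares (whose $y$-projections are pairwise disjoint and sum to at most $1$), and a horizontal line $y = t^{**}$ crossed by a set $H$ of at least $k+1$ squares (analogously constrained in $x$). Any element of $V \cap H$ must contain the point $(t^*, t^{**})$, so $|V \cap H| \leq 1$, yielding $|V \cup H| \geq 2k+1$ and $\sum_{S \in V \cup H} s_S \leq 2$. For $k=1$, the bound $L \leq 1$ follows directly from the fact that two non-overlapping axis-parallel squares in the unit square have $s_1 + s_2 \leq 1$. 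For $k=2$, one has $2k+1 = 5 = n$, so $V \cup H$ exhausts the packing and $L \leq 2$, contradicting $L > 2$.

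The hard case is $k \geq 3$: the remaining at most $k^2 - 2k$ squares are confined to the four corner rectangles cut out by the two heavy lines, and to close the argument one needs to bound their total contribution by at most $k-2$. This is the main obstacle; a direct Cauchy--Schwarz bound on each corner rectangle gives only $\sqrt{k^2 - 2k}$, which exceeds $k-2$. To close the gap I would induct on $k$, applying the theorem (in a rectangle-valued variant) to the packing induced inside each corner rectangle, and crucially exploiting the fact that the heavy-line squares of $V \cup H$ intrude into the corners and reduce the dimensions available to the corner squares. An alternative would be to strengthen the initial step by producing additional heavy vertical and horizontal lines, so that every square is covered by some heavy line and the $\leq 2$-style bound above propagates directly to $L \leq k$.
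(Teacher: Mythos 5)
Your argument is correct through $k=2$, but the proof is genuinely incomplete for $k\geq 3$, as you yourself acknowledge. Extracting one heavy vertical line and one heavy horizontal line discards too much: the four corner rectangles may contain up to $k^2-2k$ squares of total area at most $1$, and the best unconditional bound on their total side length is $\sqrt{k^2-2k}$, which exceeds the required $k-2$ for every $k\geq 3$. The proposed repair by induction on the corner rectangles faces the same quantitative shortfall (there is no clean inductive statement for a sub-rectangle of area $A$ containing $m$ squares that beats the $\sqrt{mA}$ barrier without new input), so the last step is a real missing idea, not a routine verification.

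Your alternative suggestion of ``producing additional heavy lines'' is the right instinct, but the way to make it work is not to find heavy lines one at a time; it is to take all $k$ vertical lines $x=a+j/k$, $j=0,\dots,k-1$, with a \emph{common} random offset $a\in[0,1/k)$. The expected total number of incidences between the squares and these lines is $k\sum_i d_i>k^2$, so some offset yields at least $k^2+1$ incidences; one then trades each square's side length against the portion of the $k$ lines it covers (total available length $k$), using that a square meeting $m_i$ lines of spacing $1/k$ has $d_i\geq(m_i-1)/k$. Equivalently, in lattice form: after scaling by $k$, choose offsets so that $\sum_i p_i\geq k^2+1$ and $\sum_i q_i\geq k^2+1$, where $p_i$ and $q_i$ count the vertical and horizontal lattice lines meeting $S_i$. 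Because $S_i$ is a square, $|p_i-q_i|\leq 1$, hence $p_iq_i\geq p_i+q_i-1$, and summing gives $\sum_i p_iq_i\geq k^2+1$, contradicting the fact that the disjoint $S_i$ cover at most the $k^2$ lattice points of the big square. Your single-line argument is essentially the $k=1$ instance of this counting; the missing ingredient is to run it against the full $1/k$-grid in both directions simultaneously rather than against one line in each direction.
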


The proof of \Cref{main} was discovered independently by the first author, and separately by the second and third authors working together. We first present the proof by the second and third author.

\begin{proof} (J.\ Koizumi and T.\ Ueoro)
    First, tile the unit square with squares of side length \(1/k\).
    Remove one of these squares and place two squares of side length \(1/(2k)\) in its place.
    This gives us \( g(k^2+1) \geq k \).

    \begin{figure}[h]
    \begin{tikzpicture}[scale=0.7]
    \pgfmathsetmacro{\n}{5} 
    \foreach \x in {0, ..., \n} {
        \draw (\x, 0) -- (\x, \n);
        \draw (0, \x) -- (\n, \x);
    }
    \filldraw[fill=lightgray] (4, 0) rectangle (4.5, 0.5);
    \filldraw[fill=lightgray] (4.5, 0.5) rectangle (5, 1);
    \end{tikzpicture}
    \caption{Proof of $g(k^2+1)\geq k,\; k=5$}
    \label{fig:lb}
    \end{figure}
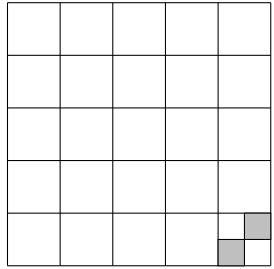

    Let us prove that $g(k^2+1)\leq k$.
    Suppose that there are squares $S_1,S_2,\dots,S_{k^2+1}$ packed inside a unit square, where $S_i$ has side length $d_i$, and that $\sum_{i=1}^{k^2+1}d_i > k$.
    Assume that the sides of $S_i$ are parallel to the sides of the unit square.
    We take a coordinate so that the unit square becomes $[0,1]^2$.
    Choose $a\in [0,1/k)$ uniformly at random, and consider the vertical lines
    $$
        L_j=\{x=a+(j/k)\},\quad j=0,1,\dots,k-1.
    $$

    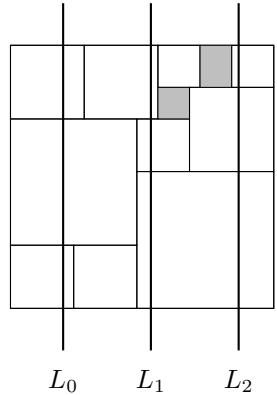
\begin{figure}[h]
    \begin{tikzpicture}[scale=0.7]
    \pgfmathsetmacro{\x}{1}
    \draw (0,0) rectangle (5,5);
    \draw (0,0) rectangle (1.2,1.2);
    \draw (0,1.2) rectangle (2.4,3.6);
    \draw (0,3.6) rectangle (1.4,5);
    \draw (1.2,0) rectangle (2.4,1.2);
    \draw (2.4,0) rectangle (5,2.6);
    \draw (1.4,3.6) rectangle (2.8,5);
    \draw (2.4,2.6) rectangle (3.4,3.6);
    \draw (3.4,2.6) rectangle (5,4.2);
    \draw (2.8,4.2) rectangle (3.6,5);
    \draw (4.2,4.2) rectangle (5,5);
    \filldraw[fill=lightgray] (2.8,3.6) rectangle (3.4,4.2);
    \filldraw[fill=lightgray] (3.6,4.2) rectangle (4.2,5);
    \draw[thick] (\x,-0.8) -- (\x,5.8);
    \draw[thick] (\x+5/3,-0.8) -- (\x+5/3,5.8);
    \draw[thick] (\x+10/3,-0.8) -- (\x+10/3,5.8);
    \coordinate[label=below:$L_0$] (L0) at (\x,-1);
    \coordinate[label=below:$L_1$] (L1) at (\x+5/3,-1);
    \coordinate[label=below:$L_2$] (L2) at (\x+10/3,-1);
    \end{tikzpicture}
    \caption{Lines $L_0,L_1,\dots,L_{k-1},\; k=3$}
    \end{figure}
    
    If we fix $i\in \{1,2,\dots,k^2+1\}$ and choose $j\in \{0,1,\dots,k-1\}$ uniformly at random, then the probability that $S_i$ intersects with $L_j$ is $d_i$.
    In other words, if we set
    $$
    m_i := \#\{j\in \{0,1,\dots,k-1\}\mid S_i\cap L_j\neq \emptyset\},
    $$
    then the expected value of $m_i$ is $kd_i$.
    Taking the sum over $i\in \{1,2,\dots,k^2+1\}$, we see that the expected value of $\sum_{i=1}^{k^2+1}m_i$ is $\sum_{i=1}^{k^2+1}kd_i>k^2$.
    Therefore, there is some $a\in [0,1/k)$ such that
    \begin{align}\label{ineq1}
        \sum_{i=1}^{k^2+1}m_i\geq k^2+1
    \end{align}
    holds.
    We fix such $a\in [0,1/k)$.
    
    Next, we divide the set \( \{1, 2, \dots, k^2 + 1\} \) into three parts:
    \begin{align*}
    A := \{i\in \{1,2,\dots,k^2+1\}\mid m_i=0\},\\
    B := \{i\in \{1,2,\dots,k^2+1\}\mid m_i=1\},\\
    C := \{i\in \{1,2,\dots,k^2+1\}\mid m_i\geq 2\}.
    \end{align*}
    Then the inequality (\ref{ineq1}) can be rewritten as
    $$
        \#B+\sum_{i\in C} m_i \geq \#A+\#B+\#C,
    $$
    so we get
    \begin{align}\label{ineq2}
        \#A \leq \sum_{i\in C} (m_i-1).
    \end{align}
    
    For $i\in A$, the square $S_i$ does not intersect with any of the lines $L_0,L_1,\dots,L_{k-1}$, so we have $d_i\leq 1/k$.
    For $i\in B$, the square $S_i$ intersects with exactly one of the lines $L_0,L_1,\dots,L_{k-1}$, so we have
    $$
        d_i= \sum_{j=0}^{k-1}\mu(S_i\cap L_j),
    $$
    where $\mu$ denotes the length of a segment.
    Finally, for $i\in C$, the square $S_i$ intersects with exactly $m_i$ of the lines $L_0,L_1,\dots,L_{k-1}$, so we have
    $$
        d_i\geq \dfrac{m_i-1}{k}\quad \text { and }\quad m_id_i = \sum_{j=0}^{k-1}\mu(S_i\cap L_j),
    $$
    which yields
    $$
        d_i \leq m_id_i-d_i \leq \sum_{j=0}^{k-1}\mu(S_i\cap L_j)-\dfrac{m_i-1}{k}.
    $$
    Taking the sum of these inequalities, we obtain
    \begin{align*}
        \sum_{i=1}^{k^2+1}d_i & \leq \dfrac{\#A}{k}+\sum_{i\in B}\sum_{j=0}^{k-1}\mu(S_i\cap L_j)+\sum_{i\in C}\sum_{j=0}^{k-1}\mu(S_i\cap L_j) - \sum_{i\in C}\dfrac{m_i-1}{k}\\
        & = \dfrac{1}{k}\left(\#A-\sum_{i\in C}(m_i-1)\right)+\sum_{i=1}^{k^2+1}\sum_{j=0}^{k-1}\mu(S_i\cap L_j)\\
        &\leq k.
    \end{align*}
    Here, we used the inequality (\ref{ineq2}) in the last line.
    This contradicts our assumption that $\sum_{i=1}^{k^2+1}d_i>k$.
    Therefore we get $g(k^2+1)\leq k$.
\end{proof}

We now present the proof of \Cref{main} by the first author. The two proofs are essentially equivalent and differ only in the presentation style. This proof uses lattice counting.

\begin{proof} (J.\ Baek)
With \Cref{fig:lb}, it suffices to show that $g(k^2+1) \leq k$. That is,
if the squares $S_1, S_2, \ldots, S_{k^2+1}$ are packed inside a unit square, 
and each $S_i$ have sides of length $d_i$ parallel to that of the unit square, then $\sum_{i=1}^{k^2+1} d_i \leq k$.
Assume for the sake of contradiction that $\sum_{i=1}^{k^2+1} d_i > k$.

Stretch the packing in the $x$- and $y$-axis directions by a factor of $k$ and let $N := k^2$.
Then the squares $S_1, S_2, \ldots, S_{N+1}$ now have the total side length $\sum_{i=1}^{N+1} d_i > N$ and packed inside a large square of side $k$ and area $N$. Assume without loss of generality that the large square containing $S_i$'s is $T := [0, k]^2$. Also write each $S_i$ as $X_i \times Y_i$ where $X_i$ and $Y_i$ are half-open intervals of $\mathbb{R}$ closed in the left end (that is, of form $[a, b)$). This way, we can assume that $S_1, S_2, \ldots, S_{N+1}$ are disjoint subsets of $T$.

For each $1 \leq i \leq N+1$ and $x \in [0, 1)$, let $p_i(x) := |X_i \cap (\mathbb{Z} + x)| \in \mathbb{Z}$. Then
$$
\int_0^1 \left(\sum_{i=1}^{N+1} p_i(x)\right) dx = \sum_{i=1}^{N+1} \int_0^1 p_i(x) dx = \sum_{i=1}^{N+1} d_i > N
$$
so there is some $x_0 \in [0, 1)$ such that $\sum_{i=1}^{N+1} p_i(x_0) \geq N + 1$.
Likewise, let $Y_i$ be the projection of $S_i$ to the $y$-axis, and let 
$q_i(y) := |Y_i \cap (\mathbb{Z} + y)| \in \mathbb{Z}$ for $y \in [0, 1)$. A symmetric argument can find $y_0 \in [0, 1)$ such that $\sum_{i=1}^{N+1} q_i(y_0) \geq N + 1$.
As $X_i$ and $Y_i$ are half-open intervals, we have $p_i(x_0), q_i(y_0) \in \{ \lfloor d_i \rfloor, \lceil d_i \rceil \}$ and thus $|p_i(x_0) -  q_i(y_0)| \leq 1$.

Take the point $p := (x_0, y_0)$. We have
$$
N = |T \cap (\mathbb{Z}^2 + p)| \geq  \sum_{i=1}^{N+1} |S_i \cap (\mathbb{Z}^2 + p)| = \sum_{i=1}^{N+1} p_i(x_0) q_i(y_0) 
$$
because $S_i$'s are disjoint subsets of $T$. As $p_i(x_0), q_i(y_0) \in \mathbb{Z}$ and $|p_i(x_0) -  q_i(y_0)| \leq 1$, we further have
$$
\sum_{i=1}^{N+1} p_i(x_0) q_i(y_0) \geq \sum_{i=1}^{N+1} \left( p_i(x_0) +  q_i(y_0) - 1 \right)
= \sum_{i=1}^{N+1} p_i(x_0) + \sum_{i=1}^{N+1} q_i(y_0) - (N+1) \geq N + 1
$$
and get the desired contradiction.
\end{proof}

\begin{corollary}
    For any integers $k,c$ with $-k<c<k$, we have $g(k^2+2c+1)=k+(c/k)$.
\end{corollary}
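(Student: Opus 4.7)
My plan is to establish the upper bound $g(k^2 + 2c + 1) \leq k + c/k$, the matching lower bound being recorded in the introduction. Rather than appealing to Praton's equivalence as a black box, I would adapt the first author's lattice-counting proof of \Cref{main}, which generalizes almost verbatim with $c$ appearing as an additive shift through each inequality.

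The steps mirror the proof of \Cref{main}. First, assume for contradiction that $n := k^2 + 2c + 1$ axis-aligned squares $S_i = X_i \times Y_i$ (with half-open factors) of side lengths $d_i$ are packed inside $[0,1]^2$ with $\sum_{i=1}^n d_i > k + c/k$, and rescale by $k$ to place them disjointly inside $T := [0,k]^2$ with total side length strictly greater than $N + c$, where $N := k^2$. Next, define $p_i(x) := |X_i \cap (\mathbb{Z} + x)|$ and $q_i(y) := |Y_i \cap (\mathbb{Z} + y)|$, and average over $x, y \in [0,1)$ to produce shifts $x_0, y_0$ with $\sum_{i=1}^n p_i(x_0) \geq N + c + 1$ and $\sum_{i=1}^n q_i(y_0) \geq N + c + 1$ (the integer-valued sums exceed $N + c$ on average, so do so pointwise somewhere). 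Finally, combine the disjointness bound $N \geq \sum_{i=1}^n p_i(x_0)\,q_i(y_0)$ with the elementary inequality $p_i(x_0)\,q_i(y_0) \geq p_i(x_0) + q_i(y_0) - 1$, valid because $p_i(x_0), q_i(y_0) \in \{\lfloor d_i \rfloor, \lceil d_i \rceil\}$ differ by at most one. Summing then yields $N \geq 2(N + c + 1) - n = N + 1$, the required contradiction.

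I anticipate no real obstacle; the argument is essentially bookkeeping, with the value $c$ propagated cleanly through the two key inequalities. The only point worth a glance is that the bounds $N + c + 1$ remain positive even in the regime $c < 0$, which is immediate from $|c| < k \leq N$, and that the integer inequality $ab \geq a + b - 1$ for nonnegative integers with $|a - b| \leq 1$ is insensitive to the sign of $c$. So the corollary reduces to a line-by-line rerun of the main proof.
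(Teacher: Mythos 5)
Your proof is correct, but it takes a genuinely different route from the paper. The paper disposes of the corollary in one line by invoking Praton's theorem that the case $c=0$ (i.e.\ \Cref{main}) formally implies the general case $f(k^2+2c+1)=k+(c/k)$, observing that Praton's reduction works verbatim for $g$. You instead rerun the lattice-counting proof of \Cref{main} with $c$ threaded through: the averaging step gives $\sum_i p_i(x_0)\geq N+c+1$ and $\sum_i q_i(y_0)\geq N+c+1$, and since $n=N+2c+1$, the inequality $\sum_i p_iq_i\geq \sum_i p_i+\sum_i q_i-n$ again lands at $N\geq N+1$. I checked the arithmetic and the two key inequalities (the disjointness bound and $ab\geq a+b-1$ for nonnegative integers with $|a-b|\leq 1$), and everything goes through; the lower bound you defer to is indeed the axis-parallel Campbell--Staton construction stated in the introduction. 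What your approach buys is self-containedness: it shows the lattice-counting argument proves the full family of equalities directly, with no appeal to Praton's equivalence as a black box. What the paper's approach buys is brevity and a conceptual point, namely that the general statement is \emph{formally equivalent} to the case $c=0$, so only the single clean statement $g(k^2+1)=k$ needs a bespoke proof.
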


\begin{proof}
    Praton \cite{Praton08} showed that if $f(k^2+1)=k$ is true for every $k>0$, then $f(k^2+2c+1)=k+(c/k)$ is true for any integers $k,c$ with $-k<c<k$.
    The same proof shows that the same implication holds for $g(n)$.
    Therefore the claim follows from \Cref{main}.
\end{proof}

\printbibliography

@article{Praton08,
 Author = {Praton, Iwan},
 Title = {Packing squares in a square},
 FJournal = {Mathematics Magazine},
 Journal = {Math. Mag.},
 Volume = {81},
 Number = {5},
 Pages = {358--361},
 Year = {2008},
 DOI = {10.1080/0025570X.2008.11953576}
}

@article{Erdos-Soifer95,
 Author = {Erd{\H{o}}s, Paul and Soifer, Alexander},
 Title = {Squares in a square},
 Journal = {Geombinatorics},
 Volume = {4},
 Number = {4},
 Pages = {110--114},
 Year = {1995}
}

@article{Erdos94,
author = {Erdős, Paul},
journal = {Mathematica Pannonica},
number = {2},
pages = {261-269},
publisher = {University of Miskolc},
title = {Some problems in number theory, combinatorics and combinatorial geometry.},
volume = {5},
year = {1994}
}

@article{Staton-Tyler07,
 Author = {Staton, William and Tyler, Benton},
 Title = {On the {Erd{\H{o}}s} square-packing conjecture},
 Journal = {Geombinatorics},
 Volume = {17},
 Number = {2},
 Pages = {88--94},
 Year = {2007}
}

@article{Praton13,
 Author = {Praton, Iwan},
 Title = {Tiling a unit square with 8 squares},
 Journal = {Geombinatorics},
 Volume = {22},
 Number = {3},
 Pages = {109--115},
 Year = {2013},
}

@Article{Campbell-Staton05,
 Author = {Campbell, Connie and Staton, William},
 Title = {A square-packing problem of {Erd{\H{o}}s}},
 Journal = {Am. Math. Mon.},
 Volume = {112},
 Number = {2},
 Pages = {165--167},
 Year = {2005},
 DOI = {10.2307/30037414},
}

\end{document}